\numberwithin{equation}{section}
\theoremstyle{plain}
\newtheorem{theorem}{Theorem}[section]
\newtheorem{proposition}[theorem]{Proposition}
\newtheorem{lemma}[theorem]{Lemma}
\theoremstyle{definition}
\newtheorem{definition}[theorem]{Definition}
\theoremstyle{remark}
\newtheorem{remark}[theorem]{Remark}
\title{Proximal algorithm and calibrated cycles}
\author{Ryohei Chihara}
\address{}
\email{}
\subjclass[2010]{53C38, 90C25, 49J40}
\keywords{calibration, proximal algorithm, total variation}
\begin{document}
\maketitle

\begin{abstract}
We sketch an application of proximal algorithms to the deformation of de Rham currents into cycles, which is presented as a convex optimization problem. Emphasis is placed on the use of total variation denoising for differential forms, specifically in constructing calibrated cycles in calibrated manifolds.
\end{abstract}

\section{Introduction}

In this note, we explore the question: How can we transform a de Rham current into a cycle that is located as closely as possible to it? Our approach focuses on the concept of denoising. In our context, we can compare a given de Rham current, denoted as $T_0$, to a noisy image. Here, the boundary $\partial T_0$ represents noise, drawing a direct analogy to the study of total variation denoising as discussed in \cite{ROF}. Our goal is to remove this noise and find a cycle $T$.  We look at a convex optimization problem to do this, with an emphasis on making calibrated cycles. We outline some results for this problem, obtained by applying the proximal algorithm, often without rigorous proofs.

This note is structured as follows: Section 2 explains the basics of exterior algebras, de Rham currents and the idea of positivity. In Section 3, we explain our proximal algorithm and show how it works in an unconstrained setting. Section 4 presents an application of our method in calibrated geometry.

\section{Preliminaries}

\subsection{Exterior algebra} 
Let $V$ be an $n$-dimensional real vector space with a metric and an orientation. 
Let $\wedge^{k}V$ denote the subspace that consists of all $k$-vectors in the Exterior algebra $\oplus_{k}\wedge^{k}V$. A $k$-vector $v$ can be interpreted as an alternative multi-linear $k$-form over the dual vector space $V^{*}$. 
The metric on $V$ induces two natural norms for $v \in \wedge^{k}V$, the \textit{Euclidean norm} $|v|$ and the \textit{mass norm} $\|v\|$.
Although the Euclidean norm is the same as the Euclidean norm for tensor algebras up to a constant, the mass norm is defined in a more intrinsic manner.
A $k$-vector $v$ is called to be \textit{decomposable} if there are some $v_1, v_2, \ldots, v_k \in V$ such that $v = v_1 \wedge v_2 \wedge \dots \wedge v_k$. We can identify the set of all unit decomposable $k$-vectors with that of all oriented $k$-planes, denoted by $G(k, n)$. 
The mass norm $\|v\|$ for $v \in \wedge^{k}V$ is defined such that the unit ball $\{v \in \wedge^{k}V \ |\  \|v\| \leq 1\}$ coincides with the convex hull of $G(k, n)$. The following lemma gives another characterization of the mass norm.

\begin{lemma}\label{lem: mass}
   For any $k$-vector $v$, there are some decomposable $k$-vectors $b_1, b_2, \ldots, b_l$ such that $v = b_1 + b_2 + \dots + b_l$ and $\|v\| = |b_1| + |b_2| + \dots  + |b_l|$. Moreover, $\|v\| \leq |c_1| + |c_2| + \dots + |c_m|$ for any decomposable $k$-vectors $c_1, c_2, \ldots, c_m$ satisfying $v = c_1 + c_2 + \dots + c_m$.
\end{lemma}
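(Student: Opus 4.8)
The plan is to identify the mass norm with the Minkowski gauge of the convex body $K := \operatorname{conv} G(k,n) \subseteq \wedge^{k}V$, namely $\|v\| = \inf\{t > 0 : v \in tK\}$, and to read off both assertions directly from this description, using only two elementary facts about $G(k,n)$: it is compact and spans $\wedge^{k}V$, and each of its elements has Euclidean norm $1$. First I would check that the gauge is well defined, i.e.\ that $K$ is a symmetric convex body with the origin in its interior. Compactness of $K$ holds because it is the convex hull of the compact set $G(k,n)$ in a finite-dimensional space; symmetry holds since reversing the orientation of a $k$-plane sends $a \in G(k,n)$ to $-a$; and $0$ lies in the interior of $K$ because the standard basis $k$-vectors $e_{i_1}\wedge\cdots\wedge e_{i_k}$ (with $i_1<\cdots<i_k$) belong to $G(k,n)$ and form a basis of $\wedge^{k}V$, so $G(k,n)$ spans and its symmetric convex hull contains a neighbourhood of $0$. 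Hence $\|v\|$ is finite for every $v$, and, $K$ being closed, the infimum is attained, so $v/\|v\| \in K$ whenever $v \neq 0$.

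For the existence statement, the case $v = 0$ is trivial, so assume $v \neq 0$. Since $v/\|v\| \in K = \operatorname{conv} G(k,n)$, it is a finite convex combination $v/\|v\| = \sum_{i=1}^{l}\mu_i a_i$ with $\mu_i > 0$, $\sum_i \mu_i = 1$, and $a_i \in G(k,n)$ (Carath\'eodory's theorem even lets us take $l \le \binom{n}{k}+1$, though this is not needed). Setting $b_i := \|v\|\mu_i a_i$, each $b_i$ is a positive scalar multiple of a decomposable $k$-vector, hence decomposable; moreover $\sum_i b_i = \|v\|\,(v/\|v\|) = v$, and since a unit decomposable $k$-vector has Euclidean norm $1$ we get $|b_i| = \|v\|\mu_i$, so that $\sum_i |b_i| = \|v\|\sum_i \mu_i = \|v\|$, as required.

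For the minimality (``moreover'') statement, let $v = c_1 + \cdots + c_m$ with each $c_j$ decomposable. After discarding the zero terms, write each remaining one as $c_j = |c_j|\,a_j$ with $a_j := c_j/|c_j| \in G(k,n)$ — every nonzero decomposable $k$-vector is a positive multiple of a unit one, by applying Gram--Schmidt to a frame spanning it. Put $S := \sum_j |c_j|$. If $S = 0$ then $v = 0$ and the inequality is trivial; otherwise $v/S = \sum_j (|c_j|/S)\,a_j$ is a convex combination of points of $G(k,n)$, hence lies in $K$, and the definition of the gauge yields $\|v\| \le S = \sum_j |c_j|$.

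I do not expect a serious obstacle here; the points that need care are the preliminary verification that $\operatorname{conv} G(k,n)$ genuinely is a symmetric convex body with $0$ in its interior (so that $\|\cdot\|$ is finite) and that the gauge infimum is attained (closedness of $K$), together with the normalization that the points of $G(k,n)$ have Euclidean norm $1$, which is precisely what makes $|b_i|$ the correct summand rather than some other quantity. As a byproduct the argument also gives $\|c\| = |c|$ for every decomposable $c$: the inequality $\|c\| \le |c|$ follows from $c/|c| \in G(k,n) \subseteq K$, while $\|c\| \ge |c|$ follows from the existence statement and the Euclidean triangle inequality, since any decomposition $c = \sum_i b_i$ with $\sum_i |b_i| = \|c\|$ gives $|c| \le \sum_i |b_i| = \|c\|$.
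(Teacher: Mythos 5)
Your proof is correct and follows essentially the same route as the paper's: both read the mass norm as the gauge of $\operatorname{conv} G(k,n)$, use compactness/closedness to place $v/\|v\|$ in that hull and expand it as a finite convex combination of unit decomposable vectors for the existence part, and deduce the minimality part from $v/\bigl(\sum_j |c_j|\bigr)$ lying in the hull. Your added verification that $\operatorname{conv} G(k,n)$ is a symmetric convex body with $0$ in its interior (so the gauge is finite and attained) is a point the paper leaves implicit, but it does not change the approach.
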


\begin{proof}
   For any $v \neq 0$, $\|v\| = \inf\{ c > 0 \ |\ v/c \in \text{the convex hull of }G(k,n)\}$. By the compactness of $G(k, n)$ and its convex hull, we can take a decreasing sequence $\{c_i\}$ such that $c_i$ converges to $\|v\|$ and $v/c_i$ converges to $a_1b_1 + a_2b_2 + \dots + a_lb_l$ where $a_i > 0$ and $b_i \in G(k, n)$ for all $i = 1, 2, \ldots, l$ and $a_1 + a_2 + \dots + a_l = 1$. Then the limit $v / \|v\|= a_1b_1 + a_2b_2 + \dots + a_lb_l$ gives the desired sum. The later is deduced from the fact that $|c_1| + |c_2| + \dots + |c_m| \in \{ c > 0 \ |\ v/c \in \text{the convex hull of }G(k,n)\}$ holds.
\end{proof}

The \textit{comass norm} for $v \in \wedge^{k}V$ is defined by $\|v\|^{*} = \sup \{ \langle v, w \rangle \ |\ w \in \wedge^{k}V \text{ and } \|w\| \leq 1\}$, where $\langle v, w \rangle$ is the inner product determined by the standard norm for $\wedge^{k}V$. 
The relation of these three norms is as follows.

\begin{proposition}
For any $v \in \wedge^{k}V$, we have $\|v\|^{*} \leq |v| \leq \|v\|$. Moreover, the following statements are equivalent: (i) $v$ is decomposable, (ii) $|v| = \|v\|$, (iii) $\|v\|^{*} = |v|$.
\end{proposition}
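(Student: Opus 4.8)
\emph{Proof proposal.} The plan is to establish the two norm inequalities first and then close the cycle of implications (i)$\Rightarrow$(ii)$\Rightarrow$(iii)$\Rightarrow$(i), using only the Cauchy--Schwarz and triangle inequalities together with the compactness of $G(k,n)$ already exploited in Lemma~\ref{lem: mass}.

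For $|v|\leq\|v\|$ I would apply Lemma~\ref{lem: mass}: writing $v=b_1+b_2+\dots+b_l$ with each $b_i$ decomposable and $\|v\|=|b_1|+|b_2|+\dots+|b_l|$, the triangle inequality for the Euclidean norm gives $|v|\leq|b_1|+|b_2|+\dots+|b_l|=\|v\|$. Equivalently, every element of $G(k,n)$ has Euclidean norm $1$, so the mass unit ball, being the convex hull of $G(k,n)$, is contained in the Euclidean unit ball. This containment immediately yields $\|v\|^{*}\leq|v|$: if $\|w\|\leq1$ then $|w|\leq1$, hence $\langle v,w\rangle\leq|v|\,|w|\leq|v|$ by Cauchy--Schwarz, and taking the supremum over all such $w$ proves the inequality.

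Next, (i)$\Rightarrow$(ii). If $v=v_1\wedge v_2\wedge\dots\wedge v_k$ with $v_1,\dots,v_k$ linearly dependent, then $v=0$ and both norms vanish. Otherwise I would choose an orthonormal basis $e_1,\dots,e_k$ of $\mathrm{span}(v_1,\dots,v_k)$ and write $v=c\,e_1\wedge e_2\wedge\dots\wedge e_k$ with $c\neq0$; since $e_1\wedge e_2\wedge\dots\wedge e_k\in G(k,n)$ lies in the mass unit ball we get $\|e_1\wedge e_2\wedge\dots\wedge e_k\|\leq1$, while the inequality $|\cdot|\leq\|\cdot\|$ just proved forces $\|e_1\wedge e_2\wedge\dots\wedge e_k\|=|e_1\wedge e_2\wedge\dots\wedge e_k|=1$, whence $\|v\|=|c|=|v|$. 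For (ii)$\Rightarrow$(iii), the definition of the comass as the dual norm of the mass norm gives $|v|^{2}=\langle v,v\rangle\leq\|v\|\,\|v\|^{*}$; if $|v|=\|v\|$ this reads $|v|\leq\|v\|^{*}$, which together with $\|v\|^{*}\leq|v|$ yields equality.

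Finally, (iii)$\Rightarrow$(i), which I expect to be the only genuinely delicate step. Since a linear functional attains the same supremum over a set as over its convex hull, we have $\|v\|^{*}=\sup\{\langle v,\xi\rangle \ |\ \xi\in G(k,n)\}$, and by compactness of $G(k,n)$ this supremum is a maximum, attained at some $\xi$ with $|\xi|=1$. If $\|v\|^{*}=|v|$, then $\langle v,\xi\rangle=|v|=|v|\,|\xi|$, so the equality case of Cauchy--Schwarz forces $v=|v|\,\xi$, a scalar multiple of a decomposable $k$-vector and therefore decomposable. The main obstacle is thus this last implication, where one must both reduce the comass supremum to a maximum over $G(k,n)$ and extract decomposability from the rigidity of Cauchy--Schwarz; the same rigidity, applied to equality in the triangle inequality in Lemma~\ref{lem: mass}, also gives the shortcut (ii)$\Rightarrow$(i), while everything else reduces to routine manipulation of elementary inequalities.
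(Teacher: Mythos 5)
Your proof is correct and, for the inequalities $\|v\|^{*}\leq|v|\leq\|v\|$, follows essentially the same route as the paper: the comass supremum is attained on the compact set $G(k,n)$ (whose elements have Euclidean norm $1$) together with Cauchy--Schwarz, and Lemma~\ref{lem: mass} combined with the triangle/Schwarz inequality gives $|v|\leq\|v\|$. For the equivalences the paper only says they follow ``in a similar manner,'' whereas you supply a complete cycle (i)$\Rightarrow$(ii)$\Rightarrow$(iii)$\Rightarrow$(i); your key steps --- the duality inequality $|v|^{2}=\langle v,v\rangle\leq\|v\|\,\|v\|^{*}$ for (ii)$\Rightarrow$(iii), and the Cauchy--Schwarz equality case at a maximizer $\xi\in G(k,n)$ (using that a linear functional has the same supremum over $G(k,n)$ as over its convex hull) for (iii)$\Rightarrow$(i) --- are sound, so your write-up in fact fills in the details the paper leaves implicit.
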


\begin{proof}
Since the oriented Grassmannians $G(k,n) \subset \wedge^{k}V$ is compact, there exists a $w \in G(k, n)$ such that $\|v\|^{*} = \langle v, w \rangle$. Therefore, $\|v\|^{*} \leq |v|$. By Lemma \ref{lem: mass}, we can express $v$ as a sum of decomposable k-vectors $b_1, b_2, \ldots, b_l$ such that $\|v\| = |b_1| + |b_2| + \dots + |b_l|$. Using the Schwarz lemma, we obtain
\begin{align}
|v|^{2} &= |b_1|^{2} + |b_2|^{2} + \dots + |b_l|^{2} + 2\sum_{i,j}\langle b_i, b_j \rangle \\
        &\leq |b_1|^{2} + |b_2|^{2} + \dots + |b_l|^{2} + 2\sum_{i,j}|b_i||b_j| \\
        &= \|b\|^{2}.
\end{align}
Hence, $|v| \leq \|v\|$, proving the first part of the proposition. The second part can be proved in a similar manner.
\end{proof}

\subsection{Function spaces}
\label{subsec: func}
Let $M$ be a compact smooth $n$-dimensional manifold without boundary. We assume that $M$ has a metric $g$ and an orientation.
Let $C^{0}(M, \wedge^{k}T^{*}M)$ be the Banach space of all continuous sections of $k$-forms over $M$ with $\|\rho\|_{C^{0}(M)} = \sup\{|\rho(x)| \ |\ x \in M\}$. Then the dual space $C^{0}(M, \wedge^{k}T^{*}M)^{*}$ is also a Banach space, denoted by $B$. We call $T \in B$ a \textit{current} since we can see $T$ as a de Rham current. For example, $B$ includes compact $k$-dimensional submanifolds of $M$.
Let $H$ denote the Hilbert space  $L^{2}$-sections $L^{2}(M, \wedge^{(n-k)}T^{*}M)$ of all $L^{2}$-sections of $\wedge^{(n-k)}T^{*}M$. Then $\omega \in H$ defines $T_{\omega} \in B$ by
\begin{align}
    T_{\omega}(\rho) = \int_{M}{\rho \wedge \omega} 
\end{align}
for any $\rho \in C^{0}(M, \wedge^{k}T^{*}M)$. So we can see $H$ as a subspace of $B$. Moreover, $H$ is dense in the sense of weak$^{*}$ topology on $B$.  If $\omega \in H$ is smooth, then $\partial T_{\omega} = 0$ as a de Rham current if and only if $d\omega = 0$, where $\partial$ is the boundary operator and $d$ is the exterior derivation. In this note we call a current $T$ satisfying $\partial T = 0$ a {\it cycle}, and moreover if $T=T_{\omega}$ for some $\omega \in H$ then we call $T$ an {\it $L^2$ cycle}.

\subsection{Positivity}
Let us review \textit{positive currents}, which were defined as a generalization of Radon measures in \cite{S}. They include calibrated submanifolds and more generally calibrated currents, introduced in \cite{HL}.

\begin{definition}
Let $V$ be a topological vector space. A convex cone $L \subset V$ is called a \textit{compact convex cone} if there is a $\phi \in V^{*}$ such that (i) $\phi(v) > 0$ for all $v \neq 0$ in $L$ and (ii) $\{v \in V \ |\ \phi(v) = 1\} \cap L$ is compact.
\end{definition}

\begin{definition}
Let $M$ be a smooth manifold. A continuous subbundle $\Lambda_{+}$ of $\wedge^{k}T^{*}M$ is called a \textit{cone structure} on $M$ if it satisfies that (i) the support of $\Lambda_{+}$ is compact and (ii) $\Lambda_{+}(x) \subset \wedge^{k}T^{*}M_{x}$ is a compact convex cone for all $x \in M$.  
\end{definition}

From now on, we assume that $M$ is a closed smooth $n$-dimensional manifold with a metric and an orientation. Let $B$ and $H$ denote the dual space $C^{0}(M, \wedge^{k}T^{*}M)^{*}$  and  the $L^{2}$ space $L^{2}(M, \wedge^{(n-k)}T^{*}M)$ as in \$\ref{subsec: func}, respectively. Let $\Lambda_{+} \subset \wedge^{(n-k)}T^{*}M$ be a cone structure on $M$.

\begin{definition}
A current $T \in B$ is called a \textit{positive current} if there exists a sequence $\{\omega_{i}\}$ in $H$ such that (i) $\omega_{i}$ converges to $T$ in the sense of weak$^{*}$ topology and (ii) $\omega_{i}(x) \in \Lambda_{+}(x)$ for all $i \in \mathbb{N}$ and $x \in M$.
\end{definition}

Let C denote the set of all positive currents in $B$. The following is an analogy of weak$^*$ compactness of Radon probability measures.

\begin{proposition}
\label{prop: compactness}
The subset $C$ of $B$ is a compact convex cone in the sense of the weak$^{*}$ topology.
\end{proposition}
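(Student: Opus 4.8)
The plan is to exhibit an explicit linear functional witnessing the definition of ``compact convex cone'', namely a continuous $k$-form $\rho \in C^{0}(M,\wedge^{k}T^{*}M)$, which is exactly the continuous dual of $(B,\text{weak}^{*})$. The argument has three parts. First, that $C$ is a convex cone is immediate from the definition: if $\{\omega_{i}\},\{\omega_{i}'\}\subset H$ approximate $T,T'\in C$ with values in $\Lambda_{+}$, then for $s,t\geq 0$ the sections $s\omega_{i}+t\omega_{i}'$ again take values in the convex cone $\Lambda_{+}(x)$ and converge weak$^{*}$ to $sT+tT'$, and $0\in C$. It remains to build $\rho$ so that $\rho(x)\wedge\xi>0$ (as a positive multiple of the volume form) for every $x$ and every nonzero $\xi\in\Lambda_{+}(x)$, and then to show that $\rho>0$ on $C\setminus\{0\}$ and that the slice $K:=\{T\in C\ |\ T(\rho)=1\}$ is weak$^{*}$ compact.

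For the construction of $\rho$, fix $x$. Since $\Lambda_{+}(x)$ is a compact convex cone it is closed and pointed, so via the perfect pairing $\wedge^{k}T^{*}M_{x}\times\wedge^{(n-k)}T^{*}M_{x}\to\wedge^{n}T^{*}M_{x}\cong\mathbb{R}$ there is some $\rho_{x}\in\wedge^{k}T^{*}M_{x}$ with $\rho_{x}\wedge\xi>0$ for all $\xi\in\Lambda_{+}(x)\setminus\{0\}$. Moreover the set $G_{x}$ of all such $\rho_{x}$ is an open convex cone, because $G_{x}=\{\eta\ |\ \min\{\eta\wedge\xi\ |\ \xi\in\Lambda_{+}(x),\ |\xi|=1\}>0\}$ and $\{\xi\in\Lambda_{+}(x)\ |\ |\xi|=1\}$ is compact. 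Using the continuity of the subbundle $\Lambda_{+}$ and compactness of $M$, a section that is constant near $x_{0}$ in a local trivialization and equal there to $\rho_{x_{0}}$ still lies in $G_{x}$ on a neighborhood of $x_{0}$; a finite subcover and a subordinate partition of unity then yield a global continuous section $\rho$ with $\rho(x)\in G_{x}$ for every $x$, since a nonnegative combination (not all zero) of elements of the convex cone $G_{x}$ lies in $G_{x}$.

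The compactness statement rests on a coercivity estimate. For $\xi\in\Lambda_{+}(x)$ write $\rho(x)\wedge\xi=p(x,\xi)\,\mathrm{vol}$ and $\sigma(x)\wedge\xi=q_{\sigma}(x,\xi)\,\mathrm{vol}$ for $\sigma\in C^{0}(M,\wedge^{k}T^{*}M)$; since $p(x,\cdot)$ is positive on the closed pointed cone $\Lambda_{+}(x)$, the slice $\{\xi\in\Lambda_{+}(x)\ |\ p(x,\xi)=1\}$ is compact, $|q_{\sigma}(x,\cdot)|$ is bounded on it by some $C_{\sigma,x}$, and by homogeneity $|q_{\sigma}(x,\xi)|\leq C_{\sigma,x}\,p(x,\xi)$ for all $\xi\in\Lambda_{+}(x)$; continuity and compactness of $M$ then give a uniform $C_{\sigma}:=\sup_{x}C_{\sigma,x}\leq C'\|\sigma\|_{C^{0}(M)}$. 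Hence $|\int_{M}\sigma\wedge\omega|\leq C_{\sigma}\int_{M}\rho\wedge\omega$ for every $\omega\in H$ valued in $\Lambda_{+}$, and passing to weak$^{*}$ limits, $|T(\sigma)|\leq C_{\sigma}\,T(\rho)$ for every $T\in C$. This gives positivity (if $T(\rho)=0$ then $T=0$) and shows $K$ is norm bounded, $\|T\|_{B}\leq C'$ on $K$. By Banach--Alaoglu together with the separability of $C^{0}(M,\wedge^{k}T^{*}M)$, the ball $\{\|T\|_{B}\leq C'\}$ is weak$^{*}$ compact and metrizable, so it suffices to show $K$ is weak$^{*}$ closed: given $T_{j}\in K$ converging weak$^{*}$ to $T$, the estimate $\|\omega\|_{B}\leq C'\int_{M}\rho\wedge\omega$ forces the approximating sequences of the $T_{j}$ to be eventually in a fixed ball, so a diagonal choice produces $\omega^{(j)}\in H$ valued in $\Lambda_{+}$ with $\omega^{(j)}\to T$ weak$^{*}$, whence $T\in C$ and $T(\rho)=\lim T_{j}(\rho)=1$.

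I expect the main obstacle to be the construction of $\rho$: although the compact-convex-cone hypothesis furnishes a suitable $\rho_{x}$ at each point, assembling these into a globally continuous section while preserving pointwise strict positivity requires care with the openness and continuous dependence of the ``good cones'' $G_{x}$ and with the partition of unity. A secondary technical point is the weak$^{*}$-closedness of $K$: since $B$ is not weak$^{*}$ metrizable, one must confine everything --- including the approximating sequences --- to a fixed weak$^{*}$-compact metrizable ball before running the diagonal argument.
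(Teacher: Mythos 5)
Your proposal follows essentially the same route as the paper: show $C$ is a convex cone, build a continuous transversal $k$-form by a pointwise construction glued with a partition of unity, derive a uniform comparison estimate giving $|T(\sigma)|\leq C'\|\sigma\|_{C^{0}(M)}T(\rho)$ (the paper's two-sided bound $(1/k)|\omega|\leq(\phi\wedge\omega)/\mathrm{vol}\leq k|\omega|$ in integrated form), and conclude compactness of the slice via Banach--Alaoglu. The only difference is one of detail, not of method: you spell out the weak$^{*}$-closedness of the slice (metrizability on bounded sets plus a diagonal argument), a point the paper leaves implicit in its sketch.
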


\begin{proof}
By definition, we can see that $C$ is a convex cone. Using partition of unity, we can construct a continuous $k$-form $\phi$ on $M$ such that $(\phi(x) \wedge \omega) / \mathrm{vol} > 0$ for all $x \in M$ and $\omega \neq 0$ in $\Lambda_{+}(x)$. By the compactness of the support of $\Lambda_{x}$, there is a positive constant $k$ such that $(1/k)|\omega| \leq (\phi \wedge \omega) / \mathrm{vol} \leq k|\omega|$ for all $\omega \in \Lambda_{+}$. Thus, $(1/k)\int_{M}|\omega|\mathrm{vol} \leq \int_{M}\phi \wedge \omega = T_{\omega}(\phi) \leq k\int_{M}|\omega|\mathrm{vol}$ for any $\omega \in C \cap H$. From the properties of weak$^*$ convergence we can prove that $\phi$ is a transversal form, and by the Banach-Alaoglu theorem we can prove the compactness of $\{T \in 
 B \ |\ T(\phi) = 1\} \cap C$. 
\end{proof}
We can easily apply the above general discussion to calibrated geometry.
\begin{definition}
A smooth $k$-form $\phi$ on $M$ is called a \textit{calibration} if it satisfies $\|\phi(x)\|^{*} \leq 1$ for all $x \in M$ and $d\phi = 0$. 
\end{definition}

Let $\Lambda_{\phi}(x)$ = $\{\omega \in \wedge^{(n-k)}T^{*}_{x}M\ |\ \phi \wedge \omega = \|\omega\|\mathrm{vol}\}$ for $x \in M$. Then $\Lambda_{\phi}(x)$ is a compact convex cone in $\wedge^{(n-k)}T^{*}_{x}M$ and $\Lambda_{\phi} \subset \wedge^{(n-k)}T^{*}M$ is a cone structure on $M$. We call a positive current $T \in B$ for $\Lambda_{\phi}$ a \textit{calibrated current} and denote the set of all calibrated currents by $C_{\phi}$. 
The following is straightforward from Proposition \ref{prop: compactness}.
\begin{proposition}\label{prop: calcomp}
The cone $C_{\phi}$ in $B$ is a compact convex cone in the sense of the weak$^{*}$ topology. In particular, the calibration $phi$ is a transversal form for $C_{\phi}$ and $\{T \in B\ |\ T(\phi) = 1\} \cap C_{\phi}$ is compact.
\end{proposition}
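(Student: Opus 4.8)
The statement is asserted to follow ``straightforwardly'' from Proposition \ref{prop: compactness}, so the plan is in two steps: first verify that $\Lambda_\phi$ really is a cone structure on $M$, and then feed it into Proposition \ref{prop: compactness}, keeping track of the transversal form. For the first step, fix $x\in M$ and consider the linear functional $\psi(\omega)=(\phi(x)\wedge\omega)/\mathrm{vol}$ on $\wedge^{(n-k)}T^*_xM$. The comass bound $\|\phi(x)\|^{*}\le 1$, combined with the duality between comass and mass (under which the Hodge star is an isometry for the mass norm), gives $\psi(\omega)\le\|\omega\|$ for every $\omega$, with equality precisely on $\Lambda_\phi(x)$. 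From this, $\Lambda_\phi(x)$ is closed, stable under multiplication by positive scalars, and convex: for $\omega_1,\omega_2\in\Lambda_\phi(x)$ and $t\in[0,1]$ one has $\psi(t\omega_1+(1-t)\omega_2)=t\|\omega_1\|+(1-t)\|\omega_2\|\ge\|t\omega_1+(1-t)\omega_2\|$ by the triangle inequality, while the reverse inequality holds in general, forcing equality. It is a compact convex cone in the sense of the definition because $\psi$ itself is a witness: on $\Lambda_\phi(x)$ it equals $\|\omega\|\ge|\omega|$, hence $\psi>0$ away from the origin, and $\{\psi=1\}\cap\Lambda_\phi(x)$ is closed and norm-bounded, thus compact. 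Continuity of the subbundle $\Lambda_\phi$ follows from the (smoothness, and in particular) continuity of $\phi$, and its support is compact because $M$ is.

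With $\Lambda_\phi$ identified as a cone structure, Proposition \ref{prop: compactness} applies verbatim and yields that $C_\phi$, the set of positive currents attached to $\Lambda_\phi$, is a compact convex cone in the weak$^{*}$ topology on $B$. For the refinement, I would reread the proof of Proposition \ref{prop: compactness} and note that the auxiliary $k$-form constructed there by a partition of unity may be replaced throughout by the calibration $\phi$ itself: since $(\phi(x)\wedge\omega)/\mathrm{vol}=\|\omega\|$ on $\Lambda_\phi(x)$, we get $|\omega|\le(\phi\wedge\omega)/\mathrm{vol}\le\kappa|\omega|$ on $\Lambda_\phi$ for a uniform constant $\kappa$ (all fibre norms being equivalent, with uniform constants by compactness of $M$), which is exactly the two-sided control the argument uses. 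Hence $T_\omega(\phi)=\int_M\phi\wedge\omega$ is comparable to $\int_M|\omega|\,\mathrm{vol}$ on $C_\phi\cap H$, the calibration $\phi$ is a transversal form for $C_\phi$, and the Banach--Alaoglu step of Proposition \ref{prop: compactness} gives weak$^{*}$ compactness of $\{T\in B\mid T(\phi)=1\}\cap C_\phi$.

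The only genuinely nonroutine point --- and hence the main obstacle --- is the pointwise algebraic fact that $\psi(\omega)=(\phi(x)\wedge\omega)/\mathrm{vol}\le\|\omega\|$ together with the identification of the equality locus as exactly $\Lambda_\phi(x)$; this is where the comass normalization $\|\phi(x)\|^{*}\le 1$ is used and where one must be careful about the Hodge-star identification between $(n-k)$-forms and $k$-forms. Once that is in hand, everything else is bookkeeping with the weak$^{*}$ topology, all of which has already been carried out in Proposition \ref{prop: compactness}.
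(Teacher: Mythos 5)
Your proposal is correct and takes essentially the same route as the paper, which simply asserts that $\Lambda_\phi$ is a cone structure and then declares the proposition ``straightforward from Proposition \ref{prop: compactness}''; you merely fill in the details the paper leaves implicit, namely the pointwise comass--mass duality showing $(\phi\wedge\omega)/\mathrm{vol}\leq\|\omega\|$ with equality locus $\Lambda_\phi(x)$, and the observation that the calibration $\phi$ itself can replace the partition-of-unity form as the transversal form. No gaps.
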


We call a calibrated current $T$ satisfying $\partial T = 0$ a {\it calibrated cycle} and moreover if it is defined by an $L^2$ differential form, we call it an {\it $L^2$ calibrated cycle}.

\section{Total variation denoising}

\subsection{Proximal algorithm}
A standard method for solving convex optimization problems is the proximal algorithm, established in \cite{R}. In this section, we apply this method to a toy model. This model is a variant of the heat equation for $L^2$ differential forms, as discussed in \cite{MR, MR2}.

Let $H$ be a Hilbert space and $f:H \to (-\infty, +\infty]$ be a non-constant lower semi-continuous function. Let $z_1 \in H$ and $c > 0$. Then the proximal algorithm defines a sequence $\{z_k\}$ $(k=1,2, \ldots)$ as follows.
\begin{align}
    z_{k+1} \in \mathrm{argmin}_{z\in H}\left(f(z) + \frac{1}{2c}\|z - z_k\|^2\right).
\end{align}
We expect that $\{z_k\}_k$ converges to a minimum point $\{z_{\infty}\}$ of $f(z)$ that is closest to $z_1$ in some sense.

\subsection{TV denoising for differential forms}
Let $M$ be a closed oriented Riemannian $n$-manifold. Let $H$ denote the Hilbert space $L^2(M, \wedge^{p}T^{*}M)$ consisting of $L^2$ differential forms of degree $p$. 

\begin{definition}
Let $\omega \in H$. The {\it total variation energy} $E:H \to [0, \infty]$ of $\omega$ is defined by the $L^1$ energy
\begin{align}
E(\omega) = \int_{M}|d\omega|.
\end{align}
The left-hand side is precisely defined by the $\sup$ of 
\begin{align}
\left\{\int_{M}d\gamma \wedge \omega \ |\ \gamma \in C^{\infty}(M, \wedge^{n-p-1}T^{*}M),\ |\gamma(x)| \leq 1 \text{ for all } x \in M\right\}
\end{align}
where $|\gamma|$ denotes the standard norm for $\gamma \in \wedge^{n-p-1}T^*M$.
\end{definition}

By the definition of $E(\omega)$, we have the following.

\begin{proposition}
The functional $E:H \to [0, \infty]$ is non-constant, convex and lower semi-continuous with respect to the weak topology of $H$.
\end{proposition}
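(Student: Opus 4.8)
The plan is to verify the three asserted properties of $E$ directly from the dual formula defining it, namely
\[
E(\omega) = \sup\left\{\int_{M}d\gamma \wedge \omega \ \Big|\ \gamma \in C^{\infty}(M, \wedge^{n-p-1}T^{*}M),\ |\gamma(x)| \leq 1 \text{ for all } x \in M\right\}.
\]
For each fixed admissible $\gamma$, the map $\omega \mapsto \int_{M}d\gamma \wedge \omega$ is a continuous linear functional on $H$: indeed $d\gamma$ is a bounded measurable $(n-p)$-form, so by Cauchy--Schwarz this functional is bounded, and it is in particular weakly continuous. Thus $E$ is a supremum of a family of weakly continuous (hence weakly lower semicontinuous) affine functionals, which immediately gives that $E$ is convex and lower semicontinuous with respect to the weak topology of $H$. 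This takes care of two of the three claims in one stroke, and is the conceptual heart of the argument.

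For non-constancy, I would exhibit one $\omega$ with $E(\omega) = 0$ and one with $E(\omega) > 0$. Taking $\omega = 0$ gives $E(0) = 0$. For the other, pick any smooth $\omega$ with $d\omega \not\equiv 0$ (such forms exist on any closed oriented $n$-manifold with $p < n$; if $p = n$ then $d\omega = 0$ always and one should instead note $E$ takes both the value $0$ and — via scaling of a form that is not closed in the distributional sense, or by reducing to a chart — a nonzero value, but in the relevant range $0 \le p \le n-1$ the smooth example suffices). Then choosing $\gamma$ supported near a point where $d\omega \neq 0$ and suitably normalized, one makes $\int_M d\gamma \wedge \omega > 0$, so $E(\omega) > 0$ and $E$ is non-constant.

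The only genuine subtlety — and the step I expect to require the most care — is confirming that the dual formula indeed defines the stated $L^1$ energy $\int_M |d\omega|$ when $\omega$ is smooth, i.e. the pointwise computation
\[
\sup\left\{\int_M d\gamma \wedge \omega \ \Big|\ |\gamma(x)| \le 1\right\} = \int_M |d\omega|,
\]
which rests on the pointwise duality between the Euclidean norm on $\wedge^{n-p}T^*M$ and on $\wedge^{n-p-1}T^*M$ together with a standard approximation to realize the pointwise optimal $\gamma$ by smooth sections; but for the three properties asserted in the proposition this identification is not even needed, since convexity, lower semicontinuity, and non-constancy follow from the supremum representation alone. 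Hence the proof reduces to the two short paragraphs above.
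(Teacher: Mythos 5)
Your argument is exactly what the paper intends: the paper offers no proof beyond ``by the definition of $E(\omega)$,'' and representing $E$ as a supremum of weakly continuous linear functionals $\omega \mapsto \int_M d\gamma \wedge \omega$ is precisely the definitional route to convexity and weak lower semicontinuity, with non-constancy from a smooth non-closed $\omega$ via Stokes. The only blemish is your parenthetical about $p=n$: there $d\omega \equiv 0$ for every form, so $E$ would be identically zero and no rescaling trick produces a nonzero value; but that degenerate case is implicitly excluded (the test forms $\gamma$ have degree $n-p-1 \geq 0$), so your main argument stands.
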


The propositions in the rest of this section are proven using the Hodge decomposition theorem:

\begin{theorem}We have the orthogonal decomposition:
\begin{align}
L^2(M, \wedge^pT^*M) = \overline{d\Omega^{p-1}} \oplus \overline{\delta\Omega^{p+1}} \oplus \mathcal{H}^{p}.
\end{align}
\end{theorem}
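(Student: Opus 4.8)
The plan is to obtain the decomposition from two facts: pairwise orthogonality of the three summands, which is elementary, and density of their span, whose only serious input is elliptic regularity for the Hodge Laplacian $\Delta = d\delta + \delta d$ acting on $\wedge^{p}T^{*}M$.

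First I would record the orthogonality. On the closed manifold $M$, Stokes' theorem gives $\langle d\alpha, \beta\rangle_{L^2} = \langle \alpha, \delta\beta\rangle_{L^2}$ for smooth forms, so $\delta$ is the formal $L^2$-adjoint of $d$. Hence for $\alpha \in \Omega^{p-1}$ and $\beta \in \Omega^{p+1}$ one has $\langle d\alpha, \delta\beta\rangle = \langle d\,d\alpha, \beta\rangle = 0$, and for a harmonic form $h \in \mathcal{H}^{p}$ (so $dh = 0$ and $\delta h = 0$) one has $\langle d\alpha, h\rangle = \langle \alpha, \delta h\rangle = 0$ and $\langle \delta\beta, h\rangle = \langle \beta, dh\rangle = 0$. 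Passing to $L^2$-closures, the three subspaces $\overline{d\Omega^{p-1}}$, $\overline{\delta\Omega^{p+1}}$ and $\mathcal{H}^{p}$ are mutually orthogonal, so their sum $W$ is a closed subspace of $L^2(M, \wedge^{p}T^{*}M)$ and the asserted direct sum makes sense.

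It then suffices to prove $W = L^2(M, \wedge^{p}T^{*}M)$, i.e. $W^{\perp} = 0$. Let $\omega \perp W$. From $\omega \perp d\Omega^{p-1}$ we get $\delta\omega = 0$ in the sense of currents, and from $\omega \perp \delta\Omega^{p+1}$ we get $d\omega = 0$ in the sense of currents; therefore $\Delta\omega = d\delta\omega + \delta d\omega = 0$ distributionally. Since $\Delta$ is a second-order elliptic operator on the closed manifold $M$, elliptic regularity (equivalently, Weyl's lemma for $\Delta$) shows that $\omega$ is smooth, hence a genuine harmonic form, so $\omega \in \mathcal{H}^{p} \subseteq W$. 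Combined with $\omega \perp W$ this forces $\|\omega\|_{L^2}^{2} = \langle \omega, \omega\rangle = 0$, whence $\omega = 0$. Thus $W = L^2(M, \wedge^{p}T^{*}M)$, which is the claimed orthogonal decomposition.

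The hard part is the elliptic regularity invoked in the last paragraph: it encodes the a priori estimate bounding the higher Sobolev norms of $\omega$ by those of $\Delta\omega$, which rests on ellipticity of $\Delta$ together with G\aa{}rding's inequality and Rellich--Kondrachov compactness, and this is where all the analysis is concentrated; in a sketch one simply quotes it. The same a priori estimate also makes $\Delta$ Fredholm with finite-dimensional kernel $\mathcal{H}^{p}$ and furnishes a Green's operator $G$, so that already on smooth forms $\omega = d(\delta G\omega) + \delta(d G\omega) + H\omega$ with $H$ the harmonic projection; taking $L^2$-closures recovers the theorem. The orthogonal-complement argument above is, however, the most economical route to the statement as given.
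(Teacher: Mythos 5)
Your argument is correct: the mutual orthogonality via $\langle d\alpha,\beta\rangle=\langle\alpha,\delta\beta\rangle$, closedness of the orthogonal sum, and the complement argument ($\omega\perp d\Omega^{p-1}$ and $\omega\perp\delta\Omega^{p+1}$ give $\Delta\omega=0$ weakly, then elliptic regularity forces $\omega\in\mathcal{H}^p$, hence $\omega=0$) is exactly the standard proof of the $L^2$ Hodge decomposition. The paper itself offers no proof of this theorem — it is quoted as the classical Hodge decomposition to be used in the subsequent propositions — so your sketch supplies precisely the standard argument the paper implicitly relies on, with the analytic weight correctly located in elliptic regularity for $\Delta$.
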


By calculating the subdifferential of $E$, we have the following.
\begin{proposition}
Let $\omega \in H$. Then $E(\omega) = 0$ if and only if $\omega \in \overline{d\Omega^{p-1}} \oplus \mathcal{H}^{p}$.
\end{proposition}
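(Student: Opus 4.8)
The plan is to characterize the kernel of the total variation energy $E$ by relating the vanishing of $E(\omega)$ to the vanishing of the distributional exterior derivative $d\omega$, and then to invoke the Hodge decomposition. First I would observe that, by the definition of $E(\omega)$ as the supremum over test forms $\gamma \in C^\infty(M, \wedge^{n-p-1}T^*M)$ with $|\gamma(x)| \le 1$ of $\int_M d\gamma \wedge \omega$, we have $E(\omega) = 0$ if and only if $\int_M d\gamma \wedge \omega = 0$ for every such $\gamma$. Rescaling $\gamma$ and using a standard density/localization argument removes the pointwise norm constraint, so $E(\omega) = 0$ is equivalent to $\int_M d\gamma \wedge \omega = 0$ for all $\gamma \in C^\infty(M, \wedge^{n-p-1}T^*M)$ — that is, $d\omega = 0$ in the sense of de Rham currents (equivalently, as an $L^2$-distributional identity after pairing with the Hodge star).

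Next I would translate this distributional condition into the language of the Hodge decomposition $L^2(M, \wedge^p T^*M) = \overline{d\Omega^{p-1}} \oplus \overline{\delta\Omega^{p+1}} \oplus \mathcal{H}^p$. Write $\omega = \omega_d + \omega_\delta + \omega_h$ accordingly. The claim is that $d\omega = 0$ weakly is equivalent to $\omega_\delta = 0$. One direction is immediate: if $\omega \in \overline{d\Omega^{p-1}}$ then $d\omega = 0$ weakly (approximate by smooth exact forms and pass to the limit), and if $\omega \in \mathcal{H}^p$ then $d\omega = 0$ since harmonic forms are closed; hence $d$ annihilates $\overline{d\Omega^{p-1}} \oplus \mathcal{H}^p$. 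For the converse, I would use that the weak derivative $d$ is the adjoint of $\pm\delta$ on smooth forms, so $\langle d\omega, \eta\rangle = \pm\langle \omega, \delta\eta\rangle$ for all smooth $\eta$; thus $d\omega = 0$ weakly means $\omega \perp \delta\Omega^{p+1}$, hence $\omega \perp \overline{\delta\Omega^{p+1}}$, which by orthogonality of the Hodge decomposition forces the $\overline{\delta\Omega^{p+1}}$-component of $\omega$ to vanish, i.e. $\omega \in \overline{d\Omega^{p-1}} \oplus \mathcal{H}^p$.

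The remaining point, and the one requiring the most care, is the equivalence between "$E(\omega)=0$" and "$d\omega = 0$ as a current/distribution" — specifically, checking that the pointwise constraint $|\gamma(x)| \le 1$ in the definition of the total variation energy can be dropped without changing the locus where $E$ vanishes. This is where I expect the only genuine subtlety: since $E$ is defined as a supremum of linear functionals of $\gamma$ over a balanced convex set of test forms, $E(\omega)=0$ forces each such functional to vanish (otherwise one could make the supremum positive by sign choice), and then homogeneity in $\gamma$ upgrades this to vanishing against all smooth $\gamma$; one should also note that it suffices to test against $\gamma$ supported in coordinate charts with arbitrarily small $C^0$-norm, which is automatic here since $M$ is compact. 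Once this equivalence is in hand, the proposition follows by combining it with the two inclusions established via the Hodge decomposition. I would present the argument in this order: reduce $E(\omega)=0$ to weak closedness; show closedness kills the coexact part; conclude.
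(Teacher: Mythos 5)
Your argument is correct. Note that the paper itself gives no written proof of this proposition: it only remarks that the statements in this section follow ``by calculating the subdifferential of $E$'' together with the Hodge decomposition, consistent with the introduction's warning that results are stated mostly without rigorous proofs. Your route bypasses the subdifferential entirely and is arguably the cleaner one for this particular statement: since the admissible set $\{\gamma : |\gamma(x)|\le 1\}$ is symmetric and scaling-invariant up to normalization, $E(\omega)=0$ is equivalent to $\int_M d\gamma\wedge\omega=0$ for \emph{all} smooth $\gamma\in\Omega^{n-p-1}$, i.e.\ to $d\omega=0$ in the sense of currents; and via $\gamma=*\eta$ this pairing is (up to sign) $(\omega,\delta\eta)_{L^2}$, so weak closedness is exactly orthogonality to $\overline{\delta\Omega^{p+1}}$, which by the orthogonal Hodge splitting is membership in $\overline{d\Omega^{p-1}}\oplus\mathcal{H}^{p}$. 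This is the same structural ingredient (Hodge decomposition) the paper invokes, but your duality argument actually supplies the missing equivalence between the vanishing of the total variation and distributional closedness; the subdifferential computation the paper alludes to would give this as a by-product (the zero set of $E$ is where $0$ realizes the supremum), at the cost of more convex-analytic machinery. Two small remarks: the easy direction is most economically done by orthogonality alone (no need to approximate by smooth exact forms, though that works too, and for $\mathcal{H}^p$ one can either cite elliptic regularity or again just use orthogonality), and the aside about test forms supported in coordinate charts is unnecessary --- the rescaling argument already removes the constraint.
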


\begin{proposition}
Let $\omega \in H$ and $\eta \in \overline{d\Omega^{p-1}} \oplus \mathcal{H}^{p}$. Then we have $E(\omega) = E(\omega + \eta)$.
\end{proposition}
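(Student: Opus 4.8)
The plan is to reduce the claim to a single observation: every $\eta$ in the subspace $\overline{d\Omega^{p-1}} \oplus \mathcal{H}^{p}$ annihilates all of the test forms appearing in the definition of $E$. Morally this is just the statement that $E(\omega)$ depends only on $d\omega$ as a de Rham current, and that $d\eta = 0$ in the current sense for such $\eta$; the task is to make this precise for merely $L^{2}$ forms.

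\textbf{Step 1 (reduction).} I would start from the definition: $E(\omega+\eta)$ is the supremum of $\int_{M} d\gamma \wedge (\omega+\eta)$ over all $\gamma \in C^{\infty}(M, \wedge^{n-p-1}T^{*}M)$ with $|\gamma(x)| \leq 1$ for every $x \in M$. For each such $\gamma$ the integrand lies in $L^{1}$, since $d\gamma$ is a bounded smooth form and $\omega, \eta \in L^{2}$, so the pairing $\int_{M} d\gamma \wedge \eta$ makes sense. Hence it suffices to show that $\int_{M} d\gamma \wedge \eta = 0$ for every smooth $\gamma$: once this is known, $\int_{M} d\gamma \wedge (\omega+\eta) = \int_{M} d\gamma \wedge \omega$ for every admissible $\gamma$, so the two suprema are taken over the same subset of $\mathbb{R}$ and $E(\omega+\eta) = E(\omega)$.

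\textbf{Step 2 (smooth closed case) and Step 3 (approximation).} Next I would check $\int_{M} d\gamma \wedge \eta = 0$ when $\eta$ is smooth and closed, which covers $\eta \in d\Omega^{p-1}$ and $\eta \in \mathcal{H}^{p}$ (harmonic forms being smooth and closed). Since $M$ is closed, Stokes' theorem gives $\int_{M} d(\gamma \wedge \eta) = 0$, and expanding $d(\gamma \wedge \eta) = d\gamma \wedge \eta + (-1)^{\deg \gamma} \gamma \wedge d\eta$ together with $d\eta = 0$ yields $\int_{M} d\gamma \wedge \eta = 0$. For a general $\eta \in \overline{d\Omega^{p-1}} \oplus \mathcal{H}^{p}$, I would write $\eta = \eta' + h$ with $h \in \mathcal{H}^{p}$ and $\eta'$ an $L^{2}$-limit of exact smooth forms $d\alpha_{i}$; then $\int_{M} d\gamma \wedge d\alpha_{i} \to \int_{M} d\gamma \wedge \eta'$ because $d\gamma$ is bounded, and each term on the left vanishes by Step 2, so $\int_{M} d\gamma \wedge \eta' = 0$; combined with $\int_{M} d\gamma \wedge h = 0$ this gives $\int_{M} d\gamma \wedge \eta = 0$, and Step 1 finishes the argument.

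\textbf{Main obstacle.} There is no serious difficulty here; the only points needing a little care are that the wedge pairing $\int_{M} d\gamma \wedge \eta$ is well defined for $L^{2}$ forms (it is, as $L^{2} \times L^{\infty} \to L^{1}$) and that the closure defining $\overline{d\Omega^{p-1}}$ is the $L^{2}$-closure, which is exactly what makes the approximation in Step 3 legitimate. Alternatively one could deduce the statement from the preceding proposition characterising $\{E = 0\}$ (since $E(\eta) = 0$), but the test-form computation above is the most self-contained route and simultaneously gives both inequalities.
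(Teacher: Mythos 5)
Your proof is correct, and it is essentially the argument the paper has in mind: the paper gives no written proof, saying only that the propositions of this section follow from the Hodge decomposition, and your observation that every admissible test form satisfies $\int_M d\gamma \wedge \eta = 0$ (by Stokes for smooth closed $\eta$, extended to $\overline{d\Omega^{p-1}}$ by $L^2$-continuity of the pairing against the bounded form $d\gamma$) is the standard way to make that precise. No gaps: the $L^2\times L^\infty$ integrability remark and the identification of the closure as the $L^2$-closure are exactly the points that need care, and you handle both.
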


Let $\omega_1 \in H$ and $h>0$. Let us consider the proximal algorithm
\begin{align}
    \omega_{k+1} \in \mathrm{argmin}_{\omega \in H}\left(E(\omega) + \frac{1}{2h}\|\omega - \omega_k\|^2\right).
\end{align}    

Then we observe analogous results to the heat flow for differential forms, as studied in \cite{MR, MR2}.

\begin{proposition}
Let $\eta \in \overline{d\Omega^{p-1}} \oplus \mathcal{H}^{p}$. Then the quantity 
\begin{align}
(\eta, \omega_k) = \int_M\langle \eta, \omega_k \rangle \mathrm{vol} 
\end{align}
is constant for the sequence $\{\omega_k\}_k$.
\end{proposition}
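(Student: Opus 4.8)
The plan is to exploit the variational characterization of $\omega_{k+1}$ together with the two previously established facts about $E$: that $E$ is translation-invariant under $\eta \in \overline{d\Omega^{p-1}} \oplus \mathcal{H}^{p}$, and that such $\eta$ lie in the kernel of $E$. Concretely, fix $\eta$ in this subspace and define $\psi(t) = \bigl(E(\omega_{k+1} + t\eta) + \tfrac{1}{2h}\|\omega_{k+1} + t\eta - \omega_k\|^2\bigr) - \bigl(E(\omega_{k+1}) + \tfrac{1}{2h}\|\omega_{k+1} - \omega_k\|^2\bigr)$. Since $\omega_{k+1}$ is a minimizer of the proximal functional and $E(\omega_{k+1} + t\eta) = E(\omega_{k+1})$ by translation-invariance, $\psi(t) \ge 0$ reduces to a statement purely about the quadratic term, and $\psi$ is a smooth quadratic in $t$ with $\psi(0) = 0$; hence $\psi'(0) = 0$.

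First I would write out $\psi'(0) = \tfrac{1}{h}(\omega_{k+1} - \omega_k, \eta) = 0$, which gives $(\omega_{k+1}, \eta) = (\omega_k, \eta)$. Then, since $\eta$ ranges over the whole subspace $\overline{d\Omega^{p-1}} \oplus \mathcal{H}^{p}$ (which is closed under scaling, so the one-parameter family argument is legitimate), this identity holds for every such $\eta$ and every $k$, and by induction on $k$ we get $(\omega_k, \eta) = (\omega_1, \eta)$ for all $k$. That is exactly the claim that $(\eta, \omega_k)$ is constant along the sequence. A small preliminary point to address: one should note $\omega_{k+1}$ exists (the proximal functional is proper, convex, lower semi-continuous and coercive because of the quadratic term, so a minimizer exists), and that $E(\omega_{k+1})$ is finite for $k \ge 1$ — otherwise the equality $E(\omega_{k+1} + t\eta) = E(\omega_{k+1})$ is the trivial $+\infty = +\infty$ and the minimizing property still forces the quadratic comparison, so the argument goes through regardless.

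The one genuine subtlety — and the step I expect to be the main obstacle — is justifying differentiation in $t$ when $E$ might only be finite on part of the line $t \mapsto \omega_{k+1} + t\eta$; but translation-invariance removes this entirely, because $E(\omega_{k+1} + t\eta)$ is literally the constant $E(\omega_{k+1})$, so there is nothing to differentiate on the $E$ side and $\psi$ is honestly a polynomial of degree $2$ in $t$. Thus the only inequality used is $\psi(t) \ge 0$ for all $t \in \mathbb{R}$, from which $\psi'(0)=0$ is immediate. I would phrase the final write-up as: expand $\|\omega_{k+1} + t\eta - \omega_k\|^2 = \|\omega_{k+1} - \omega_k\|^2 + 2t(\omega_{k+1} - \omega_k, \eta) + t^2\|\eta\|^2$, invoke the minimality of $\omega_{k+1}$ to conclude the linear coefficient vanishes, and then induct. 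Alternatively, and perhaps more cleanly, one can cite that the optimality condition for the proximal step reads $\tfrac{1}{h}(\omega_k - \omega_{k+1}) \in \partial E(\omega_{k+1})$, and since every element of $\partial E(\omega_{k+1})$ annihilates the subspace $\overline{d\Omega^{p-1}} \oplus \mathcal{H}^{p}$ (this is essentially the content of the subdifferential computation underlying the earlier propositions, as $E$ is constant in those directions), the same conclusion $(\omega_k - \omega_{k+1}, \eta) = 0$ follows.
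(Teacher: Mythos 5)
Your argument is correct: translation-invariance of $E$ along $\overline{d\Omega^{p-1}} \oplus \mathcal{H}^{p}$ reduces the comparison at the proximal step to the quadratic term, whose linear coefficient in $t$ must vanish, giving $(\omega_{k+1}-\omega_k,\eta)=0$; equivalently, the optimality condition $\tfrac{1}{h}(\omega_k-\omega_{k+1})\in\partial E(\omega_{k+1})$ together with the fact that subgradients annihilate directions along which $E$ is constant. The paper states this proposition without a written proof, indicating only that it follows from the Hodge decomposition and the subdifferential computation of $E$, and your write-up is precisely the argument that sketch points to, so there is no divergence in approach. One small point: your fallback remark that the argument ``goes through regardless'' if $E(\omega_{k+1})=+\infty$ is not quite right (you cannot cancel an infinite value from both sides), but the case never occurs, since testing the proximal functional at a smooth form such as $\omega=0$ shows its infimum is finite, hence $E(\omega_{k+1})<\infty$ at any minimizer.
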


\begin{theorem}
Let $\omega_1 \in H$. The sequence $\{\omega_k\}$ $(k=1, 2, \ldots)$ weakly converges to $\omega_{\infty} \in H$. The limit $\omega_{\infty}$ is the projection of $\omega_1$ to $\overline{d\Omega^{p-1}} \oplus \mathcal{H}^{p}$ and satisfies $E(\omega_{\infty}) = 0$. Then we have 
$\|\omega_1 - \omega_{\infty}\| = \inf_{\omega \in H}\{\|\omega_1 - \omega\|\ |\ E(\omega) = 0\}$.
\end{theorem}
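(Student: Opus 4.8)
The plan is to reduce the statement to the textbook weak‑convergence theorem for the proximal point algorithm applied to a function with a \emph{unique} minimizer, exploiting the translation invariance of $E$ recorded in the preceding propositions. Throughout write $W := \overline{d\Omega^{p-1}} \oplus \mathcal{H}^{p}$, and let $P$ and $Q = \mathrm{id}-P$ be the orthogonal projections of $H$ onto $W$ and $W^{\perp}$. From the earlier propositions, $W = \{\omega \in H : E(\omega)=0\} = \mathrm{argmin}_{H} E$ (since $E \ge 0$, $E(0)=0$), and $E(\omega+\eta)=E(\omega)$ whenever $\eta \in W$; in particular $E(\omega) = E(Q\omega)$ by taking $\eta = -P\omega$.

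First I would check each proximal step is well defined and unique: $\omega \mapsto E(\omega) + \tfrac{1}{2h}\|\omega-\omega_k\|^{2}$ is proper, lower semicontinuous and strongly convex, hence coercive, so it attains its infimum at exactly one point. Next comes the key reduction. Writing $\omega = P\omega + Q\omega$ and $\omega_k = P\omega_k + Q\omega_k$, the Pythagorean identity $\|\omega-\omega_k\|^{2} = \|P\omega-P\omega_k\|^{2} + \|Q\omega-Q\omega_k\|^{2}$ together with $E(\omega)=E(Q\omega)$ makes the minimization decouple: the $W$-component is optimal at $P\omega = P\omega_k$, and the $W^{\perp}$-component minimizes $E(v) + \tfrac{1}{2h}\|v-Q\omega_k\|^{2}$ over $v \in W^{\perp}$. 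Hence $P\omega_{k+1}=P\omega_k$ for all $k$ — this recovers the conserved‑quantity proposition — so $P\omega_k = P\omega_1$ for every $k$, while $v_k := Q\omega_k$ is exactly the proximal point sequence in the Hilbert space $W^{\perp}$ for the restricted functional $\tilde E := E|_{W^{\perp}}$, which is again proper, lower semicontinuous and convex, with $\mathrm{argmin}_{W^{\perp}}\tilde E = W \cap W^{\perp} = \{0\}$.

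Now I would invoke the convergence theorem for the proximal point algorithm from \cite{R}: since $\mathrm{argmin}_{W^{\perp}}\tilde E \neq \emptyset$, the sequence $\{v_k\}$ converges weakly to some minimizer of $\tilde E$, and as that set is the single point $0$ we get $v_k \rightharpoonup 0$. Therefore $\omega_k = P\omega_1 + v_k \rightharpoonup P\omega_1 =: \omega_{\infty}$, which lies in $W = \mathrm{argmin}_{H} E$; thus $\omega_{\infty}$ is the orthogonal projection of $\omega_1$ onto $W$ and $E(\omega_{\infty})=0$. Finally $\|\omega_1-\omega_{\infty}\| = \|Q\omega_1\| = \mathrm{dist}(\omega_1,W) = \inf\{\|\omega_1-\omega\| : \omega \in W\} = \inf\{\|\omega_1-\omega\| : E(\omega)=0\}$, the last equality being the characterization $W=\{E=0\}$.

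For completeness one should either cite or reprove the convergence theorem just used. The self-contained route is: from the optimality condition $h^{-1}(v_k - v_{k+1}) \in \partial \tilde E(v_{k+1})$ and $0 \in \partial \tilde E(0)$, derive the Fej\'er inequality $\|v_{k+1}\|^{2} \le \|v_k\|^{2} - \|v_{k+1}-v_k\|^{2}$; conclude $\{v_k\}$ is bounded and $\|v_{k+1}-v_k\|\to 0$; show every weak cluster point of $\{v_k\}$ is a minimizer of $\tilde E$; and use that a bounded sequence in a Hilbert space with a unique weak cluster point converges weakly to it. I expect the only genuinely non-formal step to be the identification of cluster points: it rests on the demiclosedness of the maximal monotone operator $\partial \tilde E$ — i.e.\ that $v_{k_j}\rightharpoonup \bar v$ together with subgradients tending to $0$ strongly forces $0 \in \partial \tilde E(\bar v)$ — which ultimately relies on the lower semicontinuity of $E$ and the Hodge‑theoretic description of $\partial E$ already granted above. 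Everything else is routine Hilbert‑space convexity.
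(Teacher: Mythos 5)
Your proof is correct, and it follows essentially the route the paper itself indicates (the paper states this theorem without a detailed proof, pointing only to the Hodge decomposition and the proximal point convergence theorem of \cite{R}): decomposing $H = \overline{d\Omega^{p-1}} \oplus \mathcal{H}^{p} \oplus$ its orthogonal complement, using translation invariance of $E$ to decouple the prox step, conserving the projection onto the kernel of $E$, and applying the standard weak convergence result to the complementary component with unique minimizer $0$. Nothing essential is missing beyond the routine verifications you already flag (demiclosedness of $\partial \tilde E$ and lower semicontinuity of the restricted functional), so this fills in the argument the paper leaves implicit.
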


\section{Calibrated cycles}
Let $(M, \phi)$ be an oriented Riemannian manifold with a calibration $\phi$ of degree $k$. Let $H$ be the Hilbert space of $L^2$ $(n-k)$-forms on $M$. Now we define the feasible region of our convex optimization problem by 
\begin{align}
C = \{ \omega \in H \ |\ T_{\omega} \text{ is a calibrated current}\},
\end{align}
where $T_{\omega}$ is a $k$-dimensional current defined by 
\begin{align}
T_{\omega}(\rho) = \int_M\rho \wedge \omega \quad \text{for} \quad \rho \in C^0(M, \wedge^{k}T^{*}M).
\end{align}
Our problem is to minimize 
\begin{align}
E(\omega) = \int_M|d\omega|
\end{align}
via the following proximal algorithm
\begin{align}
\omega_{k+1} \in \mathrm{argmin}_{\omega \in C}\left(E(\omega) + \frac{1}{2h}\|\omega - \omega_k\|^2\right)
\end{align}
where $h$ is a positive constant. Then we can see the following results about the minimizing sequence $\{\omega_k\}_k$. The proofs is given by the variational inequalities for convex cones, see for example \cite{R}.
\begin{proposition}
Let $\omega \in H$ be a calibrated current satisfying $E(\omega)=0$. Then the sequence $\{(\omega, \omega_k)\}_k$ is monotonically increasing, where 
\begin{align}
(\omega, \omega_k) = \int_M \langle \omega, \omega_k\rangle \mathrm{vol}_{M}.
\end{align}
\end{proposition}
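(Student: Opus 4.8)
\emph{Proof proposal.} The plan is to read off a variational inequality from the defining property of the proximal step and then exploit two structural features of the problem: that the feasible region $C$ is a convex cone, and that $E$ is convex, positively homogeneous of degree one, and has the given $\omega$ as a global minimizer (indeed $E\ge 0$ and $E(\omega)=0$). Write $\Phi(z)=E(z)+\tfrac{1}{2h}\|z-\omega_k\|^2$, so that $\omega_{k+1}$ minimizes $\Phi$ over $C$. Since $\omega\in C$ and $C$ is a cone, $\lambda\omega\in C$ for every $\lambda\ge 0$, and hence the segment $z_s:=(1-s)\omega_{k+1}+s\lambda\omega$ lies in $C$ for all $s\in[0,1]$. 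The idea is to test the minimality of $\omega_{k+1}$ against these competitors $z_s$ and then let $s\to 0^+$ and $\lambda\to+\infty$.

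First I would expand $\Phi(z_s)$. Setting $w:=\lambda\omega-\omega_{k+1}$, the quadratic part satisfies the exact identity $\tfrac{1}{2h}\|z_s-\omega_k\|^2=\tfrac{1}{2h}\|\omega_{k+1}-\omega_k\|^2+\tfrac{s}{h}(\omega_{k+1}-\omega_k,\,w)+\tfrac{s^2}{2h}\|w\|^2$, while convexity of $E$ together with positive homogeneity and $E(\omega)=0$ gives $E(z_s)\le(1-s)E(\omega_{k+1})+s\lambda E(\omega)=(1-s)E(\omega_{k+1})$. Substituting both into $\Phi(z_s)\ge\Phi(\omega_{k+1})$, the common term $\tfrac{1}{2h}\|\omega_{k+1}-\omega_k\|^2$ cancels, and after dividing by $s>0$ and letting $s\to 0^+$ one is left with $\tfrac1h(\omega_{k+1}-\omega_k,\,\lambda\omega-\omega_{k+1})\ge E(\omega_{k+1})\ge 0$. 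Thus $(\omega_{k+1}-\omega_k,\,\lambda\omega-\omega_{k+1})\ge 0$ for every $\lambda\ge 0$. Dividing this by $\lambda>0$ and sending $\lambda\to+\infty$ kills the term $-\tfrac1\lambda(\omega_{k+1}-\omega_k,\,\omega_{k+1})$ and yields $(\omega_{k+1}-\omega_k,\,\omega)\ge 0$, i.e.\ $(\omega,\omega_{k+1})\ge(\omega,\omega_k)$; since $k$ was arbitrary, $\{(\omega,\omega_k)\}_k$ is monotonically nondecreasing.

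Equivalently, one may phrase this with subdifferentials: the first-order optimality condition for the constrained minimization is $0\in\partial E(\omega_{k+1})+\tfrac1h(\omega_{k+1}-\omega_k)+N_C(\omega_{k+1})$, and one uses $(\nu,\omega)\le 0$ for $\nu\in N_C(\omega_{k+1})$ (because $C$ is a cone containing $\omega$) together with $(g,\omega)\le E(\omega)=0$ for $g\in\partial E(\omega_{k+1})$ (because $E$ is convex and $1$-homogeneous, so every subgradient is dominated by $E$ itself). I expect the main obstacle to be not this computation but the well-posedness of the proximal iteration: one must know the $\mathrm{argmin}$ defining $\omega_{k+1}$ is attained in $C$, which requires $C$ to be weakly closed (it is, being cut out by the pointwise closed convex cone constraints $\omega(x)\in\Lambda_\phi(x)$, which is where Proposition~\ref{prop: calcomp} enters), $E$ to be weakly lower semicontinuous (already recorded), and coercivity, supplied by the regularizer $\tfrac{1}{2h}\|z-\omega_k\|^2$. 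Granting this, the only other delicate point — differentiating through $E$ in the parameter $s$ — is avoided entirely by using the convexity bound $E(z_s)\le(1-s)E(\omega_{k+1})$ above.
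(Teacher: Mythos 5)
Your argument is correct and is essentially the paper's own approach: the paper simply invokes ``variational inequalities for convex cones'' from \cite{R}, and your computation (testing the proximal minimality of $\omega_{k+1}$ against $(1-s)\omega_{k+1}+s\lambda\omega$, using that $C$ is a convex cone, $E$ is convex with $E(\lambda\omega)=0$, and then sending $s\to 0^+$, $\lambda\to\infty$) is exactly that variational inequality worked out in detail. The only minor slip is attributing weak closedness of $C\subset H$ to Proposition \ref{prop: calcomp} (which concerns weak$^*$ compactness in $B$); closedness of $C$ in $H$ follows directly from the pointwise closed convex cone constraint, but this does not affect the monotonicity argument itself.
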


Using Theorem 1 in \cite{R}, we can see the following convergence result.

\begin{theorem}
If we have an $L^2$ calibrated cycle $\omega$ and an initial $L^2$ calibrated current $\omega_1$ such that $\int \langle \omega, \omega_1 \rangle \mathrm{vol}_M > 0$ then the minimizing sequence $\{\omega_k\}_k$ weakly converges to a non-zero $L^2$ calibrated cycle $\omega_{\infty}$.
\end{theorem}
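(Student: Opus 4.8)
The plan is to recognize the iteration as the classical proximal point algorithm applied to a single proper convex lower semicontinuous functional on $H$, to invoke its weak convergence (Theorem~1 of \cite{R}) to obtain a minimizer $\omega_\infty$, and then to use the monotonicity of $k\mapsto(\omega,\omega_k)$ from the preceding Proposition to exclude $\omega_\infty=0$.

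First I would introduce $\tilde E:=E+\iota_C:H\to(-\infty,+\infty]$, where $\iota_C$ is the indicator function of $C$; minimizing $E(\cdot)+\tfrac{1}{2h}\|\cdot-\omega_k\|^2$ over $C$ is the same as minimizing $\tilde E(\cdot)+\tfrac{1}{2h}\|\cdot-\omega_k\|^2$ over all of $H$. The set $C$ is a convex cone, and it is closed in $H$: if $\omega_i\to\omega$ in $L^2$ then $T_{\omega_i}\to T_\omega$ in the weak$^*$ topology of $B$, and $C_\phi$ is weak$^*$-closed by Proposition~\ref{prop: calcomp}. Thus $\iota_C$, hence $\tilde E$, is convex and lower semicontinuous, and $\tilde E$ is proper because the given $L^2$ calibrated cycle $\omega$ lies in $C$ and satisfies $E(\omega)=0$ (indeed $\int_M d\gamma\wedge\omega=\pm\,\partial T_\omega(\gamma)=0$ for every test form $\gamma$). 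Consequently $\partial\tilde E$ is maximal monotone, and for each $k$ the functional $\tilde E(\cdot)+\tfrac{1}{2h}\|\cdot-\omega_k\|^2$ is strongly convex, coercive and lower semicontinuous and therefore has a unique minimizer; so the sequence $\{\omega_k\}$ is well defined and coincides with the one generated by the proximal point algorithm for $\partial\tilde E$ with constant step $h$.

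Next, since $\tilde E\ge 0$ and $\tilde E(\omega)=0$, the minimum of $\tilde E$ equals $0$ and $\mathrm{argmin}_{H}\tilde E=(\partial\tilde E)^{-1}(0)$ is nonempty, so Theorem~1 of \cite{R} applies: $\{\omega_k\}$ converges weakly in $H$ to some $\omega_\infty\in\mathrm{argmin}_{H}\tilde E$. Unwinding the definition of $\tilde E$, this means $\omega_\infty\in C$ (so $T_{\omega_\infty}$ is a calibrated current) and $E(\omega_\infty)=0$ (so $\partial T_{\omega_\infty}=0$); hence $\omega_\infty$ is an $L^2$ calibrated cycle. To see $\omega_\infty\neq 0$, I would invoke the preceding Proposition: $\{(\omega,\omega_k)\}_k$ is monotonically increasing, so $(\omega,\omega_k)\ge(\omega,\omega_1)>0$ for all $k$; since $\langle\omega,\cdot\rangle$ is a fixed bounded linear functional on $H$, it is weakly continuous, and therefore $(\omega,\omega_\infty)=\lim_{k\to\infty}(\omega,\omega_k)\ge(\omega,\omega_1)>0$, which forces $\omega_\infty\neq 0$.

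The step I expect to be the main obstacle is the functional-analytic bookkeeping gathered in the second paragraph: that $C$ is a norm-closed (hence weakly closed) convex subset of $H$, that $\partial\tilde E$ is maximal monotone, and that each constrained proximal subproblem is genuinely the resolvent $(I+h\,\partial\tilde E)^{-1}$ evaluated at $\omega_k$, so that Theorem~1 of \cite{R} applies verbatim. Once these points are settled, the weak convergence is immediate and the non-triviality of $\omega_\infty$ is forced by the monotone quantity $(\omega,\omega_k)$, so the remainder of the write-up is short.
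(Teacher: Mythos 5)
Your proposal follows essentially the same route the paper intends: the paper simply invokes Theorem~1 of \cite{R} for the proximal point algorithm applied to $E$ constrained to the cone $C$, together with the preceding Proposition on the monotone quantity $(\omega,\omega_k)$ to rule out a zero limit. You fill in the functional-analytic details the paper leaves implicit (rewriting the constrained step via $E+\iota_C$, closedness and properness, nonemptiness of the minimizing set), and these details are correct, so your write-up is a faithful elaboration rather than a different argument.
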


\begin{remark}
Note that if $(M, \omega)$ is a compact K\"ahler manifold, then the K\"ahler form $\omega$ becomes an $L^2$ calibrated cycle. In this case, any non-zero positive current weakly converges to a non-zero $L^2$ calibrated cycle. Another example of $L^2$ calibrated cycles comes from calibrated foliations on calibrated manifolds. On the other hand, if a calibrated manifold $(M, \phi)$ has no $L^2$ calibrated cycles, then any minimizing sequence $\{\omega_k\}_k$ converges to zero. However, if we impose the additional constraint $\int_M\phi \wedge \omega = 1$, Proposition \ref{prop: calcomp} indicates that a subsequence of $\{\omega_k\}_k$ weakly converges to a non-zero calibrated current $T$, which might not be in $L^2$ or closed.

\end{remark}

\bibliographystyle{amsplain}
\bibliography{references}

\end{document}